\date{}
\newtheorem{theorem}{Theorem}
\newtheorem{thm}{Theorem}
\newtheorem{lem}{Lemma}
\theoremstyle{definition}
\newtheorem{dfn}{Definition}
\numberwithin{equation}{section}
\theoremstyle{remark}
\numberwithin{equation}{section}
\begin{document}
	\setcounter{page}{1}

	\title[Combinatorial Calabi flow for ideal circle pattern]
	{Combinatorial Calabi flow for ideal\\ circle pattern}

\author[Shengyu Li and Zhigang Wang]{Shengyu Li and Zhigang Wang*}

\address{\noindent Shengyu Li \vskip.03in
School of Mathematics and Statistics, Hunan First Normal University, Changsha 410205, Hunan, P. R. China.}
	
	\email{lishengyu$@$hnu.edu.cn}

	\address{\noindent Zhigang Wang\vskip.03in
		School of Mathematics and Statistics, Hunan First Normal University, Changsha 410205, Hunan, P. R. China.}
	
	\email{wangmath$@$163.com}

	\subjclass[2020]{Primary 52C26.}

	\keywords{Ideal circle pattern, combinatorial Calabi flow, combinatorial Ricci flow.}
	
\thanks{$^*$Corresponding author.}
	
	\date{\today}

	\begin{abstract}
We study the combinatorial Calabi flow for ideal circle patterns in both hyperbolic and Euclidean background geometry. We prove that the flow exists for all time and converges exponentially fast to an ideal circle pattern metric on surfaces with prescribed attainable curvatures. As a consequence, we provide an algorithm to find the desired ideal circle patterns.
	\end{abstract}
\maketitle

\tableofcontents	

\section{Introduction}
\subsection{Ideal circle pattern}
Circle patterns was introduced by Thurston \cite{T1} to study the geometry and topology of 3-manifolds.
Thurston \cite{T2} once proposed a conjecture that infinite hexagonal circle packings converge to classical Riemann mapping, which was proved by Rodin-Sullivan \cite{RS}.
Since then, circle patterns have been studied intensively in \cite{Liu,Schr1,S,Z1}.

Assume that $S$ is a compact oriented closed surface with a triangulation $\mathcal T$ and a constant Riemannian metric $\mu$.
A circle pattern $\mathcal P$ on $(S,\mu)$ is a collection of oriented circles.
A circle pattern $\mathcal P$ is $\mathcal T$-type if there exists a geodesic triangulation $\mathcal T(\mu)$ of $(S,\mu)$ with the following properties: 
(i) $\mathcal T(\mu)$ is isotopic to $\mathcal T$; (ii) the vertices of $\mathcal T(\mu)$ coincide with the centers of circles in $\mathcal P$. 

Let $V,E,F$ denote the sets of vertices, edges and faces of $\mathcal T$.
Connecting the centers of two adjacent circles $C_{v_{i}}$ and $C_{v_{j}}$ of circle pattern $\mathcal P=\{C_{v}:v\in V\}$ gives an edge $e=[v_{i},v_{j}]\in E$. For every $e\in E$, there exists an exterior intersection angle $\Theta(e)\in(0,\pi)$. For each $v\in V$, let $\mathbb{D}_v$ (resp. $D_v$) denote the open (resp. closed) disk bounded by $C_v$. Each connected component of the set $S\setminus(\cup_{v\in V} \mathbb{D}_v)$ is called an interstice.

\begin{dfn}
A $\mathcal T$-type circle pattern $\mathcal P$ on $(S,\mu)$ is called ideal if it satisfies the following properties: 

\begin{itemize}
  \item [(i)] there exists a one-to-one correspondence between the interstices of $\mathcal P$ and the 2-cells of $\mathcal T$; 
  \item [(ii)] every interstice of $\mathcal P$ consists of a point (see, e.g., Figure \ref{F11}).
\end{itemize}
\end{dfn}

\begin{figure}[htbp]
\centering
\includegraphics[height=4.0cm]{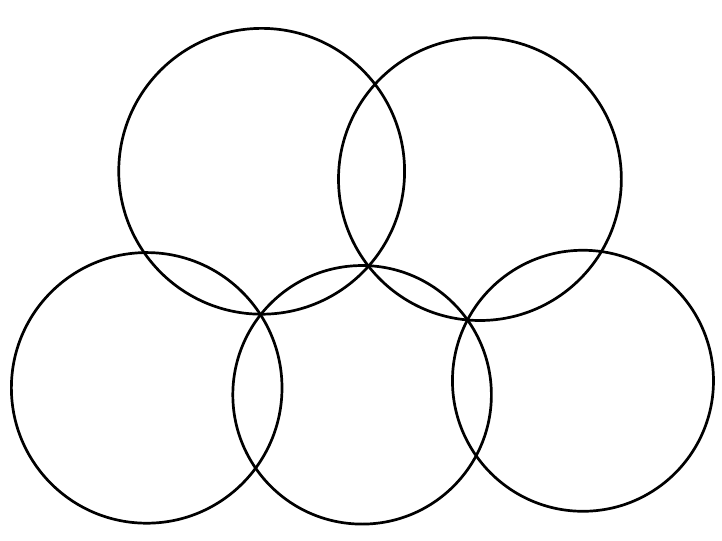}
\caption{An ideal circle pattern.}
\label{F11}
\end{figure}


Let $\Theta: E\to(0,\pi)$ be an angle function.
A natural question asks whether there exists a $\mathcal T$-type ideal circle pattern with a given exterior intersection function $\Theta$.
Thurston \cite[Chap.13]{T1} observed the relation between circle patterns and hyperbolic polyhedra.
Let $\mathbb{B}^{3}$ denote the hyperbolic 3-space.
Given a convex hyperbolic polyhedron $\mathcal Q$ in $\mathbb{B}^{3}$, the plane of each polyhedron face will cut a circle on the sphere $\partial\mathbb{B}^{3}$. All circles form a circle pattern with dual combinatorial type of $\mathcal Q$. The exterior intersection angles between adjacent circles are equal to the dihedral angles of corresponding faces of $\mathcal Q$.
A convex hyperbolic polyhedron in $\mathbb{B}^{3}$ is called ideal if all its vertices are on the sphere $\partial\mathbb{B}^{3}$.
In this way, an ideal circle pattern corresponds to a unique ideal polyhedron.
Consider the following condition:

\hspace{-4.55mm}($\mathbf{C1}$) For any distinct edges $e_{1},e_{2},\cdots,e_{m}$ forming the boundary of a 2-cell of $\mathcal T$, $$\sum_{i=1}^{m}\Theta(e_{i})=(m-2)\pi.$$

Let $g$ denote the genus of surface $S$.
Under the dual conditions of ($\mathbf{C1}$), Rivin \cite{R} proved that there exists a unique ideal polyhedron with the dihedral angles given by $\Theta$ for the case $g=0$. Equivalently, he showed that there exists a unique ideal circle pattern with given exterior intersection angles on surfaces.
Bobenko-Springborn \cite{BS} further generalized this result to the case $g\geq1$.

\subsection{Thurston's construction and curvature map}
Thurston's construction is presented as follows.
Let $S$ be a surface of genus $g>1$ (resp. $g=1$) with a triangulation $\mathcal T$.
Assume $\Theta(e)\in(0,\pi)$ for each $e\in E$.
Let $V=\big\{v_1,v_2,\cdots,v_{N}\big\}$.
A radius vector $r=(r_{1},r_{2},\cdots,r_{N})\in \mathbb{R}_{+}^{N}$ assigns each vertex $v_i\in V$ a positive radius $r_i$.
For each edge $[v_i,v_j]\in E$, we assign a length
\[
l_{ij}=\cosh^{-1}\big(\cosh r_{i}\cosh r_{j}+\sinh r_{i}\sinh r_{j}\cos\Theta([v_i,v_j])\big)
\]
in hyperbolic background geometry and
\[
l_{ij}=\sqrt{r_i^2+r_j^2+2r_ir_j\cos\Theta([v_i,v_j])}
\]
in Euclidean background geometry.
It is easy to check that there exists a unique geodesic triangle with lengths $l_{ij}$, $r_{i}$ and $r_{j}$.
Gluing all these hyperbolic (resp. Euclidean) triangles along the common edges gives a hyperbolic (resp. Euclidean) ideal circle pattern metric $\mu$ on the surface $S$ with possible cone singularities at vertices of $\mathcal T$.

To describe the singularities on $(S,\mu)$, the discrete curvature is defined to be
\[
K_i=2\pi-\sigma(v_i),
\]
where $\sigma(v_i)$ is the sum of inner angles at $v_i$ for all triangles incident to $v_i$.
This yields the following curvature map
\[
\begin{aligned}
Th: \quad {\mathbb{R}}_{+}^{N} \quad &\longrightarrow \quad\mathbb{R}^{N}\quad\\
\left(r_1,r_2,\cdots,r_{N}\right)&\longmapsto \left(K_1, K_2,\cdots,K_{N}\right).\\
\end{aligned}
\]
Then $K_1,K_2,\cdots, K_{N}$ are smooth functions of $r$.
Bobenko-Springborn~\cite{BS} gave complete characterizations of the images of the curvature map in both hyperbolic and Euclidean background geometry.

\begin{thm} [Bobenko-Springborn \cite{BS}]\label{T-1-1}
Assume that $\Theta$ is an angle function satisfying the condition $\mathbf{(C1)}$.
\begin{itemize}
  \item [(i)]
In hyperbolic background geometry, $Th$ is injective.
Moreover, the image of $Th$ consists of vectors $(K_{1},K_{2},\cdots,K_{N})$ satisfying
\begin{equation}
\label{E-1-1}
K_i<2\pi\quad (\forall\, i=1,2,\cdots,N),
\end{equation}
and
\begin{equation}
\label{E-1-2}
\sum\limits_{v_{i}\in A}K_{i}>2\pi|A|-2\sum\limits_{e\in E,\, \partial e\cap A\neq\emptyset}\Theta(e).
\end{equation}
for any non-empty subset $A$ of $V$.

\item [(ii)] In Euclidean background geometry, $Th$ is injective up to scaling.
Moreover, the image of $Th$ consists of vectors $(K_{1},K_{2},\cdots,K_{N})$ satisfying \eqref{E-1-1}
and
\begin{equation}\label{12}
\sum\limits_{v_{i}\in A}K_{i}\geq 2\pi|A|-2\sum\limits_{e\in E,\, \partial e\cap A\neq\emptyset}\Theta(e).
\end{equation}
for any non-empty subset $A$ of $V$, where the equality holds if and only if $A=V$.
\end{itemize}
\end{thm}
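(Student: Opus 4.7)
The plan is to prove both parts via a variational principle. First I would verify the crucial symmetry $\partial K_i/\partial r_j=\partial K_j/\partial r_i$ for $[v_i,v_j]\in E$, which follows by differentiating the law of cosines (hyperbolic or Euclidean) in a single triangle with fixed exterior intersection angles $\Theta$. This symmetry lets one construct, for any candidate target $\bar K$, a potential $F(r)=\int\sum_i(K_i(\rho)-\bar K_i)\,d\rho_i$ on $\mathbb{R}_+^N$ whose critical points are exactly the radius vectors realizing $\bar K$. Computing the Hessian of $F$ and writing it as a sum of triangle contributions is then the essential geometric calculation: it is positive definite in the hyperbolic case and positive semidefinite with one-dimensional kernel generated by the scaling direction $(r_1,\ldots,r_N)$ in the Euclidean case. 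This immediately yields injectivity of $Th$ in part (i) and injectivity up to scaling in part (ii).

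Necessity of the inequalities would follow from angle counting. Since every $\sigma(v_i)$ is a nonempty sum of positive inner angles, $K_i<2\pi$. For a nonempty subset $A\subseteq V$, rewrite
\[
\sum_{v_i\in A}K_i\;=\;2\pi|A|-\sum_{v_i\in A}\sigma(v_i)
\]
and decompose the right-hand sum by triangles: each triangle $\Delta$ of $\mathcal T$ meeting $A$ has inner angles summing to $\pi$ (Euclidean) or to strictly less than $\pi$ (hyperbolic, as soon as $\Delta$ has positive area). Combined with condition $(\mathbf{C1})$ and a bookkeeping of which edges of $\Delta$ have an endpoint in $A$, this yields a bound on $\sum_{v_i\in A}\sigma(v_i)$ involving only $\Theta(e)$ for $e$ with $\partial e\cap A\ne\emptyset$, producing the asserted inequality. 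The strictness in the hyperbolic case and the equality at $A=V$ in the Euclidean case (which is just Gauss-Bonnet) fall out of the same accounting.

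For sufficiency, let $\Omega\subset\mathbb{R}^N$ denote the open region cut out by (1.1) and (1.2), or its analogue for (1.3) intersected with the Gauss-Bonnet hyperplane in the Euclidean case. The injectivity result shows that $Th$ (respectively, $Th$ modulo scaling) is a local diffeomorphism into $\Omega$, so its image is open. The surjectivity plan is then to combine openness with properness: a sequence $r^{(n)}$ leaving every compact subset of $\mathbb{R}_+^N$ (after fixing a scale in the Euclidean case, e.g.\ $\sum_i r_i^2=1$) must have $Th(r^{(n)})$ escaping $\Omega$. To execute this, one passes to a subsequence with $r_i^{(n)}\to 0$ for $i\in A_0$ and $r_i^{(n)}\to\infty$ for $i\in A_\infty$ and bounded behaviour elsewhere, and studies the limits of $\sigma(v_i)$ at every vertex.

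The principal obstacle will be this degeneration analysis. One has to expand $l_{ij}$ and the triangle angles in every limiting regime, and then exhibit a specific subset $A$ for which either (1.1) fails in the limit or the inequality (1.2)/(1.3) saturates, contradicting the assumption that the limit lies in $\Omega$. A natural candidate is $A=A_\infty$ in the blow-up regime and a suitable complement of $A_0$ in the collapse regime. Condition $(\mathbf{C1})$ enters precisely here, ensuring that the angle contributions at each degenerating interstice sum up consistently so that the limiting configuration is recognised as a boundary face of $\Omega$. The Euclidean case inherits the same analysis once the scaling normalization is imposed, with the equality case $A=V$ in (1.3) reflecting the inherent one-dimensional ambiguity that was quotiented out in the first place.
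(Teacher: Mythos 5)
The paper does not prove Theorem \ref{T-1-1} at all: it is quoted from Bobenko--Springborn \cite{BS} and used as a black box (its only role here is to supply the point $u^{\ast}$ with $K(u^{\ast})=\overline{K}$ in Sections \ref{S3} and \ref{S4}). So there is no internal argument to measure your proposal against. What can be said is that your outline is the standard variational one, but closer in spirit to the Thurston/Chow--Luo continuity method (local diffeomorphism plus a properness/degeneration analysis) than to the route actually taken in \cite{BS}, where an explicit convex functional built from Milnor's Lobachevsky function is shown to be coercive precisely when the target data satisfy \eqref{E-1-1}--\eqref{E-1-2} (resp. \eqref{12}), and the pattern is obtained directly as its minimizer. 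The minimization route buys you existence without having to classify all degenerations; the continuity-method route requires that classification in full.

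Two concrete problems with your outline as written. First, the symmetry $\partial K_i/\partial r_j=\partial K_j/\partial r_i$ is false in the $r$-coordinates: what actually holds (Lemma \ref{L-2-1}) is $\sinh r_j\,\partial\vartheta_i/\partial r_j=\sinh r_i\,\partial\vartheta_j/\partial r_i$, resp. $r_j\,\partial\vartheta_i/\partial r_j=r_i\,\partial\vartheta_j/\partial r_i$, so the $1$-form $\sum_i(K_i-\bar K_i)\,d\rho_i$ is closed only after the substitution $u_i=\ln\tanh(r_i/2)$, resp. $u_i=\ln r_i$. Your potential $F$ is therefore not well defined on $\mathbb{R}_{+}^{N}$ as stated, and the Euclidean kernel is the vector $(1,\dots,1)$ in the $u$-variables rather than $(r_1,\dots,r_N)$; this is repairable, but it must be repaired before anything downstream makes sense. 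Second, and more seriously, the entire content of the sufficiency direction --- proving that $Th$ is proper as a map onto the region cut out by \eqref{E-1-1} and \eqref{E-1-2} (resp. \eqref{12}) --- is identified as ``the principal obstacle'' and then not carried out: you do not expand the edge lengths and angles in the collapse and blow-up regimes, do not verify that the limit of the curvature vector violates \eqref{E-1-1} or saturates the inequality for your candidate sets $A_0$ and $A_\infty$, and do not rule out mixed degenerations. Since that analysis is exactly where condition $(\mathbf{C1})$ and the ideal-interstice hypothesis do their work, what you have is a credible plan for a proof, not a proof.
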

\subsection{Main result}
Chow-Luo \cite{ChowLuo-jdg} introduced the combinatorial Ricci flow on closed surfaces and proved the existence and convergence of the flow. This yields an algorithm finding circle patterns.
Ge~\cite{G1} and Ge-Hua~\cite{G2} introduced the combinatorial Calabi flow for circle patterns with non-obtuse exterior intersection angles in Euclidean and hyperbolic background geometry, respectively.
Ge-Hua-Zhou~\cite{GHZ2} provided a computational method to search for ideal circle patterns by using the combinatorial Ricci flow.
We refer the reader to \cite{GHZ1,Hu,LLX,LZ,XZ} for other developments concerning combinatorial flows. In this paper, we explore the combinatorial Calabi flow introduced by Ge~\cite{G0} to find the desired ideal circle pattern metrics. 

A curvature vector $\overline{K}=(k_{1},k_{2},\cdots,k_{N})\in\mathbb{R}^{N}$ is said to be hyperbolic (resp. Euclidean) \textbf{attainable} if it satisfies \eqref{E-1-1} and \eqref{E-1-2} (resp. \eqref{12}).
In hyperbolic background geometry, let us consider the following flow
\begin{equation}\label{E-1-4}
\frac{dr_{i}}{dt}=\sum_{j=1}^{N}(k_{j}-K_{j})\frac{\partial K_{j}}{\partial r_{i}}\sinh^{2} r_{i} \quad (i=1,2,\cdots,N),
\end{equation}
where $r(0)\in \mathbb{R}_{+}^{N}$ is an initial radius vector.
In Euclidean background geometry, consider the flow
\begin{equation}\label{E-1-5}
\frac{dr_{i}}{dt}=\sum_{j=1}^{N}(k_{j}-K_{j})\frac{\partial K_{j}}{\partial r_{i}} r^{2}_{i} \quad (i=1,2,\cdots,N),
\end{equation}
where $r(0)\in \mathbb{R}_{+}^{N}$ is an initial radius vector. Using Lyapunov functions, we get the following result.

\begin{theorem}\label{T-1-2}
Assume that $\Theta$ is an angle function satisfying $\mathbf{(C1)}$ and $\overline{K}$ is hyperbolic $(resp.\,\,Euclidean)$ attainable. Then the flow \eqref{E-1-4} $(resp.\,\,\eqref{E-1-5})$ exists for all time and converges exponentially fast to a radius vector which produces a hyperbolic $(resp.\,\,Euclidean)$  ideal circle pattern metric on surfaces whose discrete curvatures are assigned by $\overline{K}$.
\end{theorem}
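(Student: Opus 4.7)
The plan is to recognise the flows \eqref{E-1-4} and \eqref{E-1-5} as the negative gradient flow of the combinatorial Calabi energy
\[
\mathcal{C}(r)=\tfrac{1}{2}\sum_{i=1}^{N}(K_i-k_i)^{2}
\]
in suitable coordinates, and then to combine standard gradient-flow techniques with the variational theory underlying Theorem~\ref{T-1-1}. First I would introduce the substitutions $u_i=\ln\tanh(r_i/2)$ in the hyperbolic case and $u_i=\ln r_i$ in the Euclidean case, so that $dr_i/du_i=\sinh r_i$ and $r_i$ respectively. A direct calculation converts both flows into the single form
\[
\frac{du_i}{dt}=\sum_{j=1}^{N}(k_j-K_j)\,\frac{\partial K_j}{\partial u_i}=-\frac{\partial \mathcal{C}}{\partial u_i},
\]
exhibiting $u(t)$ as the $-\nabla\mathcal{C}$ orbit in the $u$-variables.

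Next I would bring in the Ricci energy $F(u)$ associated with Bobenko--Springborn's variational principle, defined up to an additive constant by $\partial F/\partial u_i=K_i-k_i$; it is well defined because the Jacobian $L(u):=(\partial K_j/\partial u_i)_{ij}$ is symmetric. Moreover $L$ is positive definite in the hyperbolic case, and positive semi-definite with one-dimensional kernel $\mathbb{R}(1,\dots,1)$ in the Euclidean case (the scaling direction, annihilated because $K_j(\lambda r)=K_j(r)$). Consequently $F$ is strictly convex in the hyperbolic setting and becomes strictly convex in the Euclidean setting after restriction to any hyperplane $\{\sum_i u_i=\mathrm{const}\}$; since $\sum_i\partial K_j/\partial u_i=0$, such hyperplanes are automatically preserved by \eqref{E-1-5}. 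Theorem~\ref{T-1-1} then supplies a critical point $u^{*}$ of $F$ (unique modulo scaling in the Euclidean case) with $K(u^{*})=\overline{K}$, and the characterisation of $\mathrm{Im}(Th)$ in Theorem~\ref{T-1-1} implies that $F$ is proper on the admissible $u$-domain.

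With these ingredients the two monotonicity identities
\[
\frac{dF}{dt}=-(K-\overline{K})^{\top}L(K-\overline{K})\leq 0,\qquad \frac{d\mathcal{C}}{dt}=-\|L(K-\overline{K})\|^{2}\leq 0
\]
together with the properness of $F$ confine $u(t)$ to a compact subset of the admissible region, ruling out finite-time blow-up and yielding long-time existence on $[0,\infty)$. A standard $\omega$-limit argument using $\mathcal{C}$ as a Lyapunov function, combined with the uniqueness of $u^{*}$ on the preserved hyperplane, then forces $u(t)\to u^{*}$. Linearising the flow at $u^{*}$ gives $\dot v=-L(u^{*})^{2}v$; since $L(u^{*})$ is positive definite after restriction in the Euclidean case, $L(u^{*})^{2}$ has a spectral gap, and a Gronwall estimate upgrades the convergence to exponential rate.

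The step I expect to be the main obstacle is the long-time existence, i.e.\ ruling out that some $r_i$ drifts to $0$ or $\infty$. This reduces to establishing the properness of $F$ on the admissible $u$-domain, which requires a careful analysis of how the single-triangle contributions degenerate as a radius approaches the boundary; in the Euclidean case the argument must be carried out modulo the scaling direction, which adds a technical layer compared with the hyperbolic case.
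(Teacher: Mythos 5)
Your overall architecture (pass to the $u$-coordinates, view the flow as the negative gradient flow of the Calabi energy, introduce the convex Ricci potential $F=\Psi$ with $\partial F/\partial u_i=K_i-k_i$, use its monotonicity to trap the orbit, then get exponential convergence from positive definiteness of $L$ on the relevant compact set) is exactly the paper's. The Euclidean half of your argument is essentially complete: there $u_i=\ln r_i$, so both degenerations $r_i\to 0$ and $r_i\to+\infty$ force $\|u\|\to\infty$ within the preserved hyperplane $\Upsilon$, and the properness of $\Lambda=\Psi-\Psi(u^\ast)$ (which follows from strict convexity plus the existence of a critical point, Lemma \ref{L-3-4-1}) rules out finite-time blow-up exactly as you describe.

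The genuine gap is in the hyperbolic long-time existence, precisely at the step you flag as ``the main obstacle'' but then reduce to ``properness of $F$ on the admissible $u$-domain.'' In hyperbolic geometry the admissible domain is $\mathbb{R}_-^N$, and $r_i\to+\infty$ corresponds to $u_i=\ln\tanh(r_i/2)\to 0^-$, i.e.\ to a \emph{finite} boundary point of the domain, not to $\|u\|\to\infty$. The properness statement that actually follows from convexity (the paper's Lemma \ref{L-3-1}) only controls $\|u\|\to+\infty$, hence only excludes $r_i\to 0$; it says nothing about the orbit drifting to the hyperplane $\{u_i=0\}$, and no blow-up of $F$ there is established (or claimed) in the paper. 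The paper closes this case by an entirely different mechanism: a discrete maximum principle. Using Lemma \ref{23} one gets $K_i>2\pi\eta>k_{\max}$ once $r_i$ is large, and combining the sign information of Lemma \ref{L-2-1} with the derivative estimate of Lemma \ref{L-2-3} (namely $\rho\sum_{j\thicksim i}\partial K_j/\partial u_i+(\rho-1)\partial K_i/\partial u_i>0$ for $r_i$ large) yields $du_i/dt<0$ whenever $r_i>M$, so $r_i$ can never cross the threshold $M$. Without this (or some substitute argument showing $F$ blows up as $u_i\to 0^-$), your proof of long-time existence in the hyperbolic case is incomplete. The remaining differences are cosmetic: for convergence you propose an $\omega$-limit plus linearization argument, whereas the paper derives the differential inequality $C'\leq-2\lambda_0 C$ for $C=\sum_i(K_i-k_i)^2$ directly from a uniform lower bound on the spectrum of $L$ over the compact set containing the orbit; both work once compactness is in hand.
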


The paper is organized as follows. In Section \ref{S-2}, we introduce several properties of circle pattern.
In Section \ref{S3}, we prove Theorems \ref{0703} and \ref{19}.
We give the proofs of Theorems \ref{T-4-1} and \ref{T-1-2} in Section \ref{S4}.

\section{Preliminaries}\label{S-2}
In this section, we present several results regarding circle pattern.
\begin{lem}\cite[Lemma 1.1]{GHZ2}
Given $\Theta_{k}\in(0,\pi)$ and two positive numbers $r_i$, $r_j$, there exists a configuration of two intersection circles in both Euclidean and hyperbolic geometries, unique up to isometries, having radii $r_i$, $r_j$ and meeting exterior intersection angle $\Theta_{k}$.
\end{lem}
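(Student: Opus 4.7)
The plan is to reduce the problem to computing the center-to-center distance $d$ via the (Euclidean or hyperbolic) law of cosines, and then to observe that a pair of circles of prescribed radii whose centers lie at a prescribed distance is rigid up to ambient isometry.

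For existence, I would proceed as follows. Let $A, B$ denote the centers of the two circles and $P$ a putative intersection point. Since the tangent to $C_i$ (respectively $C_j$) at $P$ is perpendicular to the radius $AP$ (respectively $BP$), the convention that the exterior intersection angle equals $\Theta_k$ translates to $\angle APB = \pi - \Theta_k$. Applying the law of cosines to the geodesic triangle $APB$ with side lengths $r_i, r_j, d$ then gives
\[
d^2 = r_i^2 + r_j^2 + 2 r_i r_j \cos \Theta_k
\]
in the Euclidean background and
\[
\cosh d = \cosh r_i \cosh r_j + \sinh r_i \sinh r_j \cos \Theta_k
\]
in the hyperbolic background; these are exactly the length formulas appearing in the Thurston construction above. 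Conversely, I would \emph{define} $d$ by whichever formula is appropriate and verify the admissibility bounds $\lvert r_i - r_j \rvert < d < r_i + r_j$: in the Euclidean case this follows immediately from $\cos \Theta_k \in (-1, 1)$, and in the hyperbolic case it follows from the same bounds together with the identities $\cosh(a \pm b) = \cosh a \cosh b \pm \sinh a \sinh b$. These inequalities are exactly the conditions for two circles of radii $r_i, r_j$ whose centers are at distance $d$ to meet transversally at two points, and applying the law of cosines in reverse to the resulting triangle $APB$ confirms that the exterior intersection angle is $\Theta_k$.

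For uniqueness up to isometry, I would observe that any admissible configuration determines the same $d$ by the derivation above. Since the Euclidean and hyperbolic planes are homogeneous and isotropic, there is an isometry carrying the ordered pair of centers of one configuration to that of the other; this isometry preserves radii and angles and hence maps one configuration onto the other. The only non-routine ingredient is the admissibility check $\lvert r_i - r_j \rvert < d < r_i + r_j$, which is a short exercise using the addition formulas for $\cos$ and $\cosh$ together with $\Theta_k \in (0, \pi)$, so I anticipate no serious obstacle.
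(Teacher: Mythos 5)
The paper does not prove this lemma; it is quoted verbatim from \cite[Lemma 1.1]{GHZ2}, and the argument there is exactly the one you give: define $d$ by the (Euclidean or hyperbolic) law of cosines applied to the triangle formed by the two centers and an intersection point, check $\lvert r_i-r_j\rvert<d<r_i+r_j$ from $\cos\Theta_k\in(-1,1)$ and the addition formulas, and invoke homogeneity of the plane for uniqueness. Your proposal is correct and takes essentially this same approach, so nothing further is needed.
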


\begin{figure}[htbp]
\centering
\includegraphics[height=4.0cm]{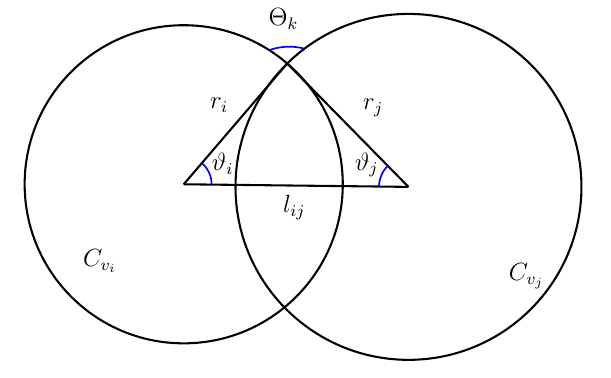}
\caption{A two-circle configuration.}
\label{F1}
\end{figure}

Let $\triangle_{{v}_{i}v_{j}v_{k}}$ be the triangle by connecting the centers $v_i$,$v_j$ and one of the intersection points of two circles (see Figure \ref{F1}).
Let $\vartheta_\eta$ be the inner angle of $\triangle_{{v}_{i}v_{j}v_{k}}$ at the centers $v_\eta$ for $\eta\in\{i,j\}$. 
The following result is due to Ge-Hua-Zhou~\cite{GHZ2}.

\begin{lem}\cite[Lemma 1.2]{GHZ2}\label{L-2-1}
Let $\Theta_k\in(0,\pi)$ be fixed.
\begin{itemize}
\item[(i)] In hyperbolic background geometry,
\[
\frac{\partial\vartheta_{i}}{\partial r_{i}}<0, \quad \sinh r_{j}\frac{\partial\vartheta_{i}}{\partial r_{j}}=\sinh r_{i}\frac{\partial\vartheta_{j}}{\partial r_{i}}>0. 
\]
\item[(ii)] In Euclidean background geometry,
\[
\frac{\partial\vartheta_{i}}{\partial r_{i}}<0, \quad r_{j}\frac{\partial\vartheta_{i}}{\partial r_{j}}=r_{i}\frac{\partial\vartheta_{j}}{\partial r_{i}}>0.
\]
\end{itemize}
\end{lem}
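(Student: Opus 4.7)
My plan is to set up the triangle $\triangle_{v_iv_jv_k}$ explicitly and derive a compact closed form for $\tan\vartheta_i$ purely in terms of $r_i,r_j,\Theta_k$, and then differentiate that quotient directly. Because each radius is perpendicular to the tangent of its own circle at the intersection point $v_k$, the interior angle of the triangle at $v_k$ is $\pi-\Theta_k$. Applying the Euclidean (resp.\ hyperbolic) law of cosines at $v_k$ reproduces the formulas for $l_{ij}$ stated in the introduction, while combining the law of sines with the cosine rule at $v_i$ yields the closed forms
\[
\tan\vartheta_i = \frac{r_j\sin\Theta_k}{r_i + r_j\cos\Theta_k}
\qquad\text{and}\qquad
\tan\vartheta_i = \frac{\sinh r_j \sin\Theta_k}{\sinh r_i \cosh r_j + \cosh r_i \sinh r_j\cos\Theta_k}
\]
in the Euclidean and hyperbolic cases respectively. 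These identities are the workhorse of the argument.

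I would then differentiate $\tan\vartheta_i = N/M$ by the quotient rule. For $\partial\vartheta_i/\partial r_i$, the numerator $N$ is independent of $r_i$ and $\partial M/\partial r_i$ is positive (equal to $1$ in the Euclidean case, and to $\cosh l_{ij}$ in the hyperbolic case once the $l_{ij}$ cosine rule is invoked), which immediately forces $\partial\vartheta_i/\partial r_i<0$. For $\partial\vartheta_i/\partial r_j$, the $\cos\Theta_k$-terms in the resulting numerator cancel, and one then uses $\cosh^2 r_j - \sinh^2 r_j = 1$ (trivial in Euclidean) to collapse the numerator to the manifestly positive factor $\sinh r_i\sin\Theta_k$ (resp.\ $r_i\sin\Theta_k$).

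For the symmetry identity, I would exploit $\cos\vartheta_i = M/\sinh l_{ij}$, which follows from the cosine rule for $\vartheta_i$ together with the same algebraic identity used above. This gives $\sec^2\vartheta_i = \sinh^2 l_{ij}/M^2$, so multiplying the quotient-rule expression for $\partial\vartheta_i/\partial r_j$ by $\cos^2\vartheta_i = M^2/\sinh^2 l_{ij}$ yields
\[
\frac{\partial\vartheta_i}{\partial r_j} = \frac{\sinh r_i\sin\Theta_k}{\sinh^2 l_{ij}},
\]
and by the same computation with $i,j$ swapped, $\partial\vartheta_j/\partial r_i = \sinh r_j\sin\Theta_k/\sinh^2 l_{ij}$. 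Both $\sinh r_j\,\partial\vartheta_i/\partial r_j$ and $\sinh r_i\,\partial\vartheta_j/\partial r_i$ therefore collapse to the manifestly symmetric, strictly positive expression $\sinh r_i\sinh r_j\sin\Theta_k/\sinh^2 l_{ij}$, with the Euclidean analogue $r_ir_j\sin\Theta_k/l_{ij}^2$. The one delicate step is the cancellation of $\cos\Theta_k$-terms in the hyperbolic quotient-rule computation of $\partial\vartheta_i/\partial r_j$, where $\cosh^2-\sinh^2=1$ is essential; everything else is routine algebra. As a conceptual sanity check, the symmetry is also the mixed-partial identity $\partial\vartheta_i/\partial u_j = \partial\vartheta_j/\partial u_i$ in the coordinates $u_i=\log\tanh(r_i/2)$ (hyperbolic) or $u_i=\log r_i$ (Euclidean), which is consistent with the closed-form expression just obtained.
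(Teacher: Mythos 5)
This lemma is quoted in the paper from \cite[Lemma 1.2]{GHZ2} without proof, so there is no in-paper argument to compare against; your derivation is a correct, self-contained proof, and it follows the standard route (explicit trigonometric closed forms for the inner angles, as in Ge--Hua--Zhou and, earlier, Chow--Luo). Your formulas check out: the angle at the intersection point is indeed $\pi-\Theta_k$, the cotangent (four-parts) rule gives $\cot\vartheta_i=\bigl(\sinh r_i\cosh r_j+\cosh r_i\sinh r_j\cos\Theta_k\bigr)/\bigl(\sinh r_j\sin\Theta_k\bigr)$, the quotient-rule numerator for $\partial\vartheta_i/\partial r_j$ collapses to $\sinh r_i\sin\Theta_k$ via $\cosh^2 r_j-\sinh^2 r_j=1$, and the identity $M^2+\sinh^2 r_j\sin^2\Theta_k=\sinh^2 l_{ij}$ turns this into the manifestly symmetric expression $\sinh r_i\sinh r_j\sin\Theta_k/\sinh^2 l_{ij}$ (resp.\ $r_ir_j\sin\Theta_k/l_{ij}^2$). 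The only point worth tightening: when $\Theta_k>\pi/2$ the denominator $M$ can vanish (i.e.\ $\vartheta_i=\pi/2$), so $\tan\vartheta_i$ is not globally defined and ``$N$ independent of $r_i$, $\partial M/\partial r_i>0$'' only gives the sign of $\partial\vartheta_i/\partial r_i$ off that locus. Working with $\cot\vartheta_i$ instead (which is smooth since $\vartheta_i\in(0,\pi)$, never $0$ or $\pi$) removes the issue and yields directly $\partial\vartheta_i/\partial r_i=-\sin^2\vartheta_i\cosh l_{ij}/(\sinh r_j\sin\Theta_k)<0$; alternatively, your final closed forms are smooth everywhere, so a one-line continuity remark also suffices. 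Your closing observation that the symmetry is the mixed-partial identity in the $u$-coordinates is consistent with Lemma \ref{L-2-1-2}.
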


\begin{lem}\cite[Lemma 1.5]{GHZ2}\label{23}
In hyperbolic background geometry, for $\epsilon>0$, there exists a positive number $M$ such that $\vartheta_i\,<\,\epsilon$ whenever $r_i>M$.
\end{lem}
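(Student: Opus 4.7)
The plan is to reduce the statement to a single asymptotic computation inside the two-circle configuration. In the triangle $\triangle_{v_iv_jv_k}$ the sides opposite to $v_i$, $v_j$, $v_k$ have hyperbolic lengths $r_j$, $r_i$, $l_{ij}$ respectively, so the hyperbolic law of cosines at the vertex $v_i$ reads
\[
\cos\vartheta_i=\frac{\cosh r_i\cosh l_{ij}-\cosh r_j}{\sinh r_i\sinh l_{ij}}.
\]
I would combine this with the defining relation $\cosh l_{ij}=\cosh r_i\cosh r_j+\sinh r_i\sinh r_j\cos\Theta_k$ and show that the right-hand side tends to $1$ as $r_i\to\infty$, with $r_j$ and $\Theta_k$ held fixed, which immediately forces $\vartheta_i\to 0$.

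The key computation is the leading-order behavior. Set $A:=\cosh r_j+\sinh r_j\cos\Theta_k$; this is strictly positive because $\cosh r_j>\sinh r_j\geq\sinh r_j|\cos\Theta_k|$ whenever $\Theta_k\in(0,\pi)$. Using the asymptotics $\cosh r_i,\sinh r_i\sim\tfrac{1}{2}e^{r_i}$ as $r_i\to\infty$, the defining relation gives
\[
\cosh l_{ij}\sim\tfrac{A}{2}e^{r_i},\qquad \sinh l_{ij}=\sqrt{\cosh^2 l_{ij}-1}\sim\tfrac{A}{2}e^{r_i}.
\]
Substituting into the law-of-cosines identity, both numerator and denominator are asymptotic to $\tfrac{A}{4}e^{2r_i}$, so $\cos\vartheta_i\to 1$ and therefore $\vartheta_i\to 0$. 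Retaining the next-order error terms produces a quantitative bound and an explicit threshold $M=M(\e,r_j,\Theta_k)$ beyond which $\vartheta_i<\e$.

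I do not anticipate a real obstacle: the entire argument is a single limit in a fixed triangle, and the only subtlety is verifying $A>0$, which is immediate. A shorter-looking alternative via the hyperbolic law of sines, $\sin\vartheta_i=\sinh r_j\sin\Theta_k/\sinh l_{ij}$, would only show $\sin\vartheta_i\to 0$ and would still require ruling out the spurious limit $\vartheta_i\to\pi$, for example via the triangle-angle-sum inequality $\vartheta_i+\vartheta_j+(\pi-\Theta_k)<\pi$ giving $\vartheta_i<\Theta_k<\pi$. The law-of-cosines route avoids this dichotomy and yields $\cos\vartheta_i\to 1$ directly, so I would prefer it.
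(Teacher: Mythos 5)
Your law-of-cosines setup is correct (the side opposite $v_i$ is indeed $r_j$, and the identity $\cosh r_j=\cosh r_i\cosh l_{ij}-\sinh r_i\sinh l_{ij}\cos\vartheta_i$ combined with the defining relation for $l_{ij}$ gives $\cos\vartheta_i\to 1$), but there is a genuine gap: you hold $r_j$ fixed throughout, and you say so explicitly, ending with a threshold $M=M(\e,r_j,\Theta_k)$. The lemma quantifies only over $r_i$; since $r_j$ is a free parameter of the two-circle configuration, the claim is that $M$ depends on $\e$ and $\Theta_k$ alone, uniformly in $r_j$. This uniformity is not a pedantic point --- it is exactly what the lemma is used for in the proof of Theorem~\ref{0703}: there one needs $K_i>2\pi\eta$ as soon as $r_i>M_1$ at a stage of the argument where the neighbouring radii $r_j$ are not yet known to be bounded along the flow. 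A threshold depending on $r_j$ would make that step circular. Your asymptotic expansion, with both numerator and denominator $\sim\tfrac{A}{4}e^{2r_i}$ where $A=A(r_j,\Theta_k)$, only yields the pointwise limit for each fixed $r_j$ and degenerates as $r_j\to\infty$ (where $A\to\infty$) unless you track the error terms uniformly.

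The gap is fixable, and the fix is essentially what the cited source \cite[Lemma 1.5]{GHZ2} does (the present paper offers no proof of its own, it only quotes the result): by Lemma~\ref{L-2-1}(i) one has $\partial\vartheta_i/\partial r_j>0$, so $\vartheta_i$ is increasing in $r_j$ and it suffices to control the extremal configuration $r_j\to+\infty$, which reduces the problem to a single limit in $r_i$. Alternatively, one can extract a bound manifestly independent of $r_j$ directly from your formulas: simplifying the numerator gives
\[
\cos\vartheta_i=\frac{\sinh r_i\cosh r_j+\cosh r_i\sinh r_j\cos\Theta_k}{\sinh l_{ij}},
\qquad
\sin\vartheta_i=\frac{\sinh r_j\sin\Theta_k}{\sinh l_{ij}},
\]
and since $\cosh l_{ij}\geq(1+\cos\Theta_k)\sinh r_i\sinh r_j$ one obtains, for $r_i$ large, $\sin\vartheta_i\leq \sqrt{2}\tan(\Theta_k/2)/\sinh r_i$ together with $\cos\vartheta_i>0$, which gives the uniform statement. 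Either route requires an input (monotonicity in $r_j$, or a uniform lower bound on $\sinh l_{ij}$ in terms of $\sinh r_i\sinh r_j$) that your proposal does not supply.
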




Recall that $\mathcal{T}$ is a triangulation of surface $S$ with vertices $V=\big\{v_1,v_2,\cdots,v_{N}\big\}$.
For a radius vector $r=(r_{1},r_{2},\cdots,r_{N})$, let $u_{i}=\ln\tanh(r_{i}/2)$ (resp. $u_{i}=\ln r_{i}$) in hyperbolic (resp. Euclidean) background geometry for $i=1,2,\cdots,N$. Then the discrete curvatures $K_{1},K_{2},\cdots,K_{N}$ are smooth functions of $u=(u_{1},u_{2},\cdots,u_{N})$. This gives the curvature map $Th$ in terms of $u$:
\[
\begin{aligned}
Th: \qquad {\mathbb{R}}_{-}^{N}\,(\text{resp.}\,\,{\mathbb{R}}^{N}) \quad\,\, &\longrightarrow \,\qquad\qquad\mathbb{R}^{N}\qquad\qquad\\
\big(u_1,u_2,\cdots,u_{N}\big)\quad&\longmapsto \quad\big(K_1, K_2,\cdots,K_{N}\big).\\
\end{aligned}
\]

The following result is obtained by Ge-Hua-Zhou~\cite{GHZ2}.
\begin{lem} \cite[Lemmas 2.1 and 3.1]{GHZ2} \label{L-2-1-2}
Let $Th$ be the curvature map.
\begin{itemize}
\item[(i)] In hyperbolic background geometry, the Jacobian matrix $L$ of $Th$ in terms of $u$ is symmetric and positive definite.
\item[(ii)] In Euclidean background geometry, the Jacobian matrix $L$ of $Th$ in terms of $u$ is symmetric and positive definite when restricted in
      \[
      \Upsilon=\left\{u\in \mathbb{R}^{N}:\sum_{i=1}^{N}u_{i}=d\right\}\quad(\text{$d$ is a given real number}),
      \]
      where the null space of $L$ is $\{k(1,1,\cdots,1):k\in \mathbb{R}\}$.
\end{itemize}
\end{lem}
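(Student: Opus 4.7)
The plan is to change variables to $u_i=\ln\tanh(r_i/2)$ (hyperbolic) and $u_i=\ln r_i$ (Euclidean), under which both flows \eqref{E-1-4} and \eqref{E-1-5} assume the gradient-type form
\[
\dot u=-L(u)\bigl(K(u)-\overline K\bigr),
\]
where $L=\partial K/\partial u$ is the Jacobian of $Th$ from Lemma \ref{L-2-1-2}. Short-time existence is then immediate from smoothness of $L$ and $K$ on the open domain. In the Euclidean case, $L\mathbf{1}=0$ by Lemma \ref{L-2-1-2}(ii) forces $\sum_i u_i(t)\equiv\sum_i u_i(0)=:d$, so I restrict the analysis to the affine hyperplane $\Upsilon_d$. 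Since $\overline K$ is attainable, Theorem \ref{T-1-1} supplies a target radius vector realizing $\overline K$; in the Euclidean case I use the scaling freedom to pick $\overline u\in\Upsilon_d$ with $Th(\overline u)=\overline K$.

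Because $L$ is symmetric, the 1-form $(K-\overline K)\cdot du$ is closed, hence exact, so
\[
F(u)=\int_{\overline u}^{u}\bigl(K(\xi)-\overline K\bigr)\cdot d\xi
\]
is a well-defined function with $\nabla F=K-\overline K$, and by Lemma \ref{L-2-1-2} it is convex (strictly convex on $\Upsilon_d$ in the Euclidean case) with unique minimum $0$ at $\overline u$. A direct computation gives
\[
\dot F(u(t))=(K-\overline K)^T\dot u=-(K-\overline K)^T L(K-\overline K)\le 0,
\]
so $F(u(t))$ decreases monotonically. The central analytic step is to show that $F$ is coercive on $\mathbb{R}_{-}^{N}$ (resp.\ on $\Upsilon_d$), so that $u(t)$ is trapped in a compact sublevel set of $F$ for all $t\ge 0$. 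This reduces to an asymptotic analysis of $K$ along sequences with some $r_i\to 0^{+}$ or $r_i\to\infty$: Lemmas \ref{L-2-1} and \ref{23} control each contributing angle $\vartheta_i$, and the attainability inequalities \eqref{E-1-2} (resp.\ \eqref{12}) provide exactly the strict margin needed to force $F\to +\infty$ at the boundary of the domain. I expect this coerciveness to be the main obstacle in the argument.

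Granting coerciveness, $u(t)$ is confined to a compact set $\Omega$ of the domain and the flow extends to all $t\ge 0$. On $\Omega$, $L$ admits a uniform positive lower bound $\lambda_0$ on its smallest (nonzero, in the Euclidean case) eigenvalue. The Calabi energy $\mathcal C(u)=\tfrac12\|K(u)-\overline K\|^{2}$ then satisfies
\[
\dot{\mathcal C}=-(K-\overline K)^T L^{2}(K-\overline K)\le -2\lambda_0^{2}\,\mathcal C,
\]
hence $\mathcal C(u(t))\le\mathcal C(u(0))e^{-2\lambda_0^{2}t}$ and $\|K(u(t))-\overline K\|\to 0$ exponentially. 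Compactness together with the injectivity of $Th$ from Theorem \ref{T-1-1} forces every subsequential limit of $u(t)$ to equal $\overline u$, so $u(t)\to\overline u$. Linearising near $\overline u$ yields $\dot v\approx -L(\overline u)^{2}v$ with $v=u-\overline u$, and since $L(\overline u)$ is positive definite (on the tangent to $\Upsilon_d$ in the Euclidean case) this upgrades the convergence of $u(t)$ itself to an exponential rate at least $\lambda_{\min}(L(\overline u))^{2}$. Passing back to $r$-coordinates then yields the desired exponential convergence to a radius vector producing an ideal circle pattern metric with prescribed curvatures $\overline K$.
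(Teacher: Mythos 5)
Your proposal does not prove the statement at hand; it proves (in outline) a different result. The statement is Lemma \ref{L-2-1-2}, a purely linear-algebraic/analytic fact about the Jacobian $L=\partial(K_1,\dots,K_N)/\partial(u_1,\dots,u_N)$ of the curvature map: that $L$ is symmetric and positive definite in the hyperbolic case, and symmetric, positive semi-definite with null space $\{k(1,\dots,1)\}$ in the Euclidean case. Your text instead sketches the long-time existence and exponential convergence of the combinatorial Calabi flow (i.e., Theorem \ref{T-1-2} / Theorems \ref{0703}, \ref{19}, \ref{T-4-1}), and in doing so it explicitly \emph{invokes} Lemma \ref{L-2-1-2} as an input (``where $L$ is the Jacobian of $Th$ from Lemma \ref{L-2-1-2}'', ``by Lemma \ref{L-2-1-2} it is convex''). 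Nowhere do you establish the symmetry $\partial K_j/\partial u_i=\partial K_i/\partial u_j$, the positivity of the quadratic form $x^TLx$, or the identification of the kernel in the Euclidean case. This is a complete miss of the target statement, not a gap in an otherwise correct argument.

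For reference, an actual proof would go roughly as follows. Write $K_i=2\pi-\sum\vartheta$, summing inner angles over the triangles incident to $v_i$, so that $L$ decomposes as a sum of contributions from two-circle configurations. The symmetry follows from Lemma \ref{L-2-1}: the relations $\sinh r_j\,\partial\vartheta_i/\partial r_j=\sinh r_i\,\partial\vartheta_j/\partial r_i$ (resp. $r_j\,\partial\vartheta_i/\partial r_j=r_i\,\partial\vartheta_j/\partial r_i$) become exactly $\partial\vartheta_i/\partial u_j=\partial\vartheta_j/\partial u_i$ after the change of variables $u_i=\ln\tanh(r_i/2)$ (resp. $u_i=\ln r_i$). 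Definiteness then requires a sign/dominance analysis of the local $2\times2$ blocks using $\partial\vartheta_i/\partial u_i<0$ and $\partial\vartheta_i/\partial u_j>0$ from the same lemma, together with (in the Euclidean case) the scaling invariance $K(u+k\mathbf{1})=K(u)$, which forces $L\mathbf{1}=0$ and restricts positivity to the hyperplane $\Upsilon$. Note that the paper itself does not reprove this lemma either; it cites Lemmas 2.1 and 3.1 of Ge--Hua--Zhou. If your intent was to prove the main theorem, your outline is broadly consistent with the paper's strategy (Lyapunov function $\Psi$, coercivity via Lemma \ref{L-3-4-1}, energy $\sum_i(K_i-k_i)^2$ with a spectral bound), but it cannot be accepted as a proof of Lemma \ref{L-2-1-2}.
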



Recall that $K_i$ is the discrete curvature at $v_i$ for $i=1,2,\cdots,N$. We write $j\thicksim i$ if the vertices $v_i$ and $v_j$ are adjacent. Using geometry arguments, Ge-Hua \cite[Lemmas 2.2 and 2.3]{G2} obtained an initial version of the following estimate. 
Applying Glickenstein-Thomas's result (see \cite[Proposition 9]{GT}), Wu-Xu \cite{WX} and Li-Luo-Xu \cite{LLX}  also independently provided different proofs of the following lemma.

\begin{lem}\label{L-2-3}
In hyperbolic background geometry, for any constant $\rho>1$, there exists a constant $M>0$ such that
\[
\rho\sum_{j\thicksim i}\frac{\partial K_{j}}{\partial u_{i}}+(\rho-1)\frac{\partial K_{i}}{\partial u_{i}}>0
\]
when $r_{i}>M$.
\end{lem}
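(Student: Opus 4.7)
The strategy is to reduce the inequality to an edge-by-edge asymptotic estimate on the two-circle configuration incident to $v_i$, and then carry out the relevant hyperbolic trigonometry as $r_i\to\infty$. First, since the circle pattern is ideal, each face of $\mathcal T$ is partitioned by its interstice into three geodesic triangles, and each edge $[v_i,v_j]$ is shared by two faces that both contribute the same two-circle angle $\vartheta_i^{(ij)}$ at $v_i$ (the angle at $v_i$ in the triangle on $v_i,v_j$ and an intersection point, as in Lemma~\ref{L-2-1}). Thus $K_i = 2\pi - 2\sum_{j\sim i}\vartheta_i^{(ij)}$, and since $dr_i/du_i=\sinh r_i$,
\[
\frac{\partial K_i}{\partial u_i} = -2\sinh r_i\sum_{j\sim i}\frac{\partial \vartheta_i^{(ij)}}{\partial r_i},\qquad \frac{\partial K_j}{\partial u_i} = -2\sinh r_i\,\frac{\partial \vartheta_j^{(ji)}}{\partial r_i}.
\]
By Lemma~\ref{L-2-1}(i) the first quantity is positive and each off-diagonal one is negative, so the lemma reduces to the edge-wise claim that for each $j\sim i$, $\bigl|\partial_{r_i}\vartheta_j^{(ji)}\bigr|=o\bigl(\bigl|\partial_{r_i}\vartheta_i^{(ij)}\bigr|\bigr)$ as $r_i\to\infty$, uniformly in $r_j>0$ and in the fixed $\Theta\in(0,\pi)$.

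Next, I would carry out an asymptotic expansion of the hyperbolic law of cosines $\cosh l_{ij}=\cosh r_i\cosh r_j+\sinh r_i\sinh r_j\cos\Theta$ in powers of $e^{-r_i}$. Setting $A:=\cosh r_j+\sinh r_j\cos\Theta>0$, the formula $\cos\vartheta_i^{(ij)}=(\cosh r_i\cosh l_{ij}-\cosh r_j)/(\sinh r_i\sinh l_{ij})$ yields $\vartheta_i^{(ij)}\sim 2\sqrt{2\cosh r_j/A}\,e^{-r_i}$, so $\sinh r_i\,\partial_{r_i}\vartheta_i^{(ij)}$ tends to a strictly negative limit. The analogous expansion for $\cos\vartheta_j^{(ji)}$ shows that it converges to a limit in $(-1,1)$ with an $O(e^{-2r_i})$ correction, giving $\sinh r_i\,\partial_{r_i}\vartheta_j^{(ji)}=O(e^{-r_i})\to 0$. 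Consequently $\partial K_i/\partial u_i$ stays bounded away from $0$ while $\partial K_j/\partial u_i\to 0$ for every $j\sim i$, so the weighted sum in the lemma becomes positive once $r_i$ is large enough.

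The main obstacle will be \emph{uniformity} in the neighboring radius $r_j$, which may itself be arbitrarily large along the flow \eqref{E-1-4}. The cleanest route, and the one essentially taken in \cite{WX,LLX}, is to invoke the Glickenstein--Thomas formula \cite[Proposition~9]{GT}, which writes each $\partial\vartheta/\partial u$ as a signed ratio of a hyperbolic altitude to a hyperbolic length in the two-circle configuration; these geometric quantities admit $r_i$-asymptotics (driven by Lemma~\ref{23}) that are uniform in $r_j$ for each fixed $\Theta\in(0,\pi)$. Combined with the leading-order computation above, this produces a single threshold $M=M(\rho,\Theta)$ that works for all choices of the adjacent radii.
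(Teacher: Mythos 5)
First, a remark on the comparison itself: the paper does not prove Lemma~\ref{L-2-3} --- it is quoted from Ge--Hua \cite{G2} and from Wu--Xu \cite{WX} and Li--Luo--Xu \cite{LLX} --- so there is no in-paper argument to measure yours against, and your overall route is the same as that of the cited sources. Your reduction is correct and worth keeping: for an ideal pattern the cone angle decomposes over two-circle triangles, $K_i=2\pi-2\sum_{j\sim i}\vartheta_i^{(ij)}$, the factor $dr_i/du_i=\sinh r_i$ is right, Lemma~\ref{L-2-1}(i) gives the signs, and the lemma does reduce to showing that $\partial_{r_i}\vartheta_j^{(ji)}\big/\bigl(-\partial_{r_i}\vartheta_i^{(ij)}\bigr)\to 0$ as $r_i\to\infty$, \emph{uniformly} in $r_j\in(0,\infty)$, since $(\rho-1)/\rho$ may be arbitrarily small.

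However, your second paragraph does not establish this, and the gap is precisely the uniformity you only flag in the third paragraph. Concretely: (1) the asymptotic constant for $\vartheta_i$ is wrong. By the hyperbolic law of sines, $\sin\vartheta_i=\sinh r_j\sin\Theta/\sinh l_{ij}$, so with $A=\cosh r_j+\sinh r_j\cos\Theta$ one gets $\vartheta_i\sim \frac{2\sinh r_j\sin\Theta}{A}\,e^{-r_i}$, not $2\sqrt{2\cosh r_j/A}\,e^{-r_i}$; you kept only one of the three $O(e^{-2r_i})$ contributions to $1-\cos\vartheta_i$. This is not cosmetic: the correct constant tends to $0$ as $r_j\to 0$, so your claim that $\partial K_i/\partial u_i$ ``stays bounded away from $0$'' fails uniformly in the neighbouring radii, and the argument ``diagonal bounded below, off-diagonal tends to zero, done'' collapses in the regime $r_j\to 0$ (it is also delicate as $r_j\to\infty$, where the limiting angle $\vartheta_j$ degenerates to $0$ and the implicit division by $\sin\vartheta_j$ blows up). (2) The statement that actually needs proof --- that the \emph{ratio} of the two derivatives tends to $0$ uniformly in $r_j$ for fixed $\Theta$ --- is the entire content of the lemma beyond the easy pointwise observation, and your proposal outsources it wholesale to \cite{GT}, \cite{WX}, \cite{LLX} without performing the altitude/length estimates. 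That puts your write-up at the same level of completeness as the paper's citation, but not at the level of a self-contained proof: to finish, either reproduce the Glickenstein--Thomas decomposition and verify the uniform ratio estimate, or prove the edge-wise inequality $(\rho-1)\bigl(-\partial_{r_i}\vartheta_i^{(ij)}\bigr)>\rho\,\partial_{r_i}\vartheta_j^{(ji)}$ for $r_i>M(\rho,\Theta)$ by explicit hyperbolic trigonometry with estimates uniform over $r_j\in(0,\infty)$.
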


\section{Combinatorial Calabi flow in hyperbolic background geometry}\label{S3}

\subsection{Hyperbolic background geometry}\noindent
Recall that $\mathcal{T}$ is a triangulation of surface $S$ with vertices $V=\big\{v_1,v_2,\cdots,v_{N}\big\}.$ For $i=1,2,\cdots,N$, let $$r=(r_{1},r_{2},\cdots,r_{N})$$ be a radius vector and  $u_{i}=\ln\tanh\left({r_{i}}/{2}\right).$
Assume that $\overline{K}=(k_{1},k_{2},\cdots,k_{N})$ is a hyperbolic attainable vector.
By changing variables, the combinatorial Calabi flow \eqref{E-1-4} is equivalent to the flow
\begin{equation}\label{E-3-1}
\frac{du_{i}}{dt}=\sum_{j=1}^{N}(k_{j}-K_{j})\frac{\partial K_{j}}{\partial u_{i}}
\quad (i=1,2,\cdots,N),
\end{equation}
where $u(0) \in \mathbb{R}_{-}^{N}$ is an initial vector.

Following Colin de Verdi\`{e}re~\cite{Colin}, Chow-Luo \cite{ChowLuo-jdg} and Ge-Hua-Zhou \cite{GHZ2}, we consider the 1-form
\[
\alpha=\sum_{i=1}^{N}(K_{i}-k_{i})du_{i}.
\]
It is easy to verify that $\alpha$ is a closed 1-form, which implies that
\[
\Psi(u)=\int_{u(0)}^{u} \sum_{i=1}^{N} (K_{i}-k_{i})du_{i}
\]
for $u=(u_{1},u_{2},\cdots,u_{N})\in \mathbb{R}_{-}^{N}$ is well-defined.
Since the Hessian matrix of $\Psi$ is equal to the positive definite Jacobian matrix $L$ mentioned in Lemma \ref{L-2-1-2}, $\Psi$ is strictly convex.
Meanwhile, Theorem \ref{T-1-1} implies that there exists a unique point $r^{\ast}\in \mathbb{R}_{+}^{N}$ such that $K_{i}(r^{\ast})=k_{i}$ for $i=1,2,\cdots,N$. Let $u^{\ast}\in \mathbb{R}_{-}^{N}$ be the vector corresponding to $r^{\ast}$ satisfying $K_{i}(u^{\ast})=k_{i}$.
A direct computation gives
\[
\frac{\partial\Psi}{\partial u_{i}} \,\bigg|_{u=u^{\ast}}=K_{i}(u^{\ast})-k_{i}=0.
\]
Hence $u^{\ast}$ is a critical point of $\Psi$. Motivated by Lyapunov theory \cite{BK,CYR,T3}, we introduce the following function
\[
\Lambda(u)=\Psi(u)-\Psi(u^\ast).
\]

\begin{lem} \label{L-3-1} In hyperbolic background geometry, we have
\[
\lim_{\|u\|\to+\infty}\Lambda(u)=+\infty.
\]
\end{lem}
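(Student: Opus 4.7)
The plan is to combine strict convexity of $\Psi$ with existence of the critical point $u^{\ast}$ to force coercivity via a contradiction argument along a ray. By Lemma~\ref{L-2-1-2}(i), the Hessian of $\Psi$ on $\mathbb{R}_{-}^{N}$ is the positive definite matrix $L$, so $\Psi$ is strictly convex. Theorem~\ref{T-1-1}(i) produces a unique $u^{\ast} \in \mathbb{R}_{-}^{N}$ with $K(u^{\ast}) = \overline{K}$, hence $\nabla\Psi(u^{\ast}) = 0$ and $u^{\ast}$ is the unique global minimum of $\Psi$; in particular $\Lambda \geq 0$ with equality only at $u^{\ast}$.

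For the contradiction, I would suppose there exists a sequence $\{u^{(n)}\} \subset \mathbb{R}_{-}^{N}$ with $\|u^{(n)}\| \to \infty$ and $\Lambda(u^{(n)}) \leq C$. Setting $t_n = \|u^{(n)} - u^{\ast}\|$ and $v_n = (u^{(n)} - u^{\ast})/t_n$, one has $t_n \to \infty$, and after passing to a subsequence $v_n \to v$ with $\|v\|=1$. The key elementary observation is that $v \leq 0$ componentwise: since $u^{(n)}_i < 0$,
\[
v_{n,i} = \frac{u_i^{(n)} - u_i^{\ast}}{t_n} \leq \frac{-u_i^{\ast}}{t_n} \longrightarrow 0.
\]
Combined with $\|v\| = 1$, this makes $v$ a nonzero vector in the closed negative orthant, so the ray $\{u^{\ast} + sv : s \geq 0\}$ stays inside $\mathbb{R}_{-}^{N}$. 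Convexity of $\Psi$ then yields, for every $s \in [0, t_n]$,
\[
\Psi(u^{\ast} + sv_n) \leq \left(1 - \tfrac{s}{t_n}\right)\Psi(u^{\ast}) + \tfrac{s}{t_n}\Psi(u^{(n)}) \leq \Psi(u^{\ast}) + C,
\]
and letting $n \to \infty$ at fixed $s$ yields $\Psi(u^{\ast} + sv) \leq \Psi(u^{\ast}) + C$ for all $s \geq 0$.

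I would close the argument by restricting $\Psi$ to the ray. Set $\phi(s) = \Psi(u^{\ast} + sv)$. Then $\phi'(0) = \nabla\Psi(u^{\ast})\cdot v = 0$ and $\phi''(s) = v^{\top}L(u^{\ast}+sv)v > 0$, since $L$ is positive definite and $v \neq 0$. Thus $\phi'$ is strictly increasing with $\phi'(0) = 0$, so $\phi'(s_0) > 0$ for every fixed $s_0 > 0$, and convexity forces $\phi(s) \geq \phi(s_0) + \phi'(s_0)(s - s_0) \to \infty$ as $s \to \infty$, contradicting the uniform bound above. The main obstacle is the previous step: isolating a limit direction $v$ that is simultaneously nonzero \emph{and} points into the closed negative orthant, so that strict convexity of $\Psi$ remains available on the entire half-line $\{u^{\ast} + sv : s \geq 0\}$. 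The attainability conditions \eqref{E-1-1}--\eqref{E-1-2} enter only indirectly, through Theorem~\ref{T-1-1}(i), to guarantee existence of $u^{\ast}$; once $u^{\ast}$ is in hand, strict convexity alone drives the coercivity.
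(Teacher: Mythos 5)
Your argument is correct, and it reaches the conclusion by a route that is the same in spirit but different in execution from the paper's. The paper disposes of Lemma~\ref{L-3-1} in two lines by invoking the black-box Lemma~\ref{L-3-4-1} (quoted from Ge--Hua--Zhou): a smooth strictly convex function on an unbounded convex set with a critical point is coercive. You instead prove that coercivity statement directly, exploiting the specific geometry of the domain $\mathbb{R}_{-}^{N}$: the normalized directions $v_n=(u^{(n)}-u^{\ast})/t_n$ have $\limsup_n v_{n,i}\leq -u_i^{\ast}/t_n\to 0$, so any subsequential limit $v$ is a unit vector in the closed nonpositive orthant and the ray $u^{\ast}+sv$ stays in the open domain; convexity along the segments $[u^{\ast},u^{(n)}]$ transfers the assumed bound $\Lambda(u^{(n)})\leq C$ to the limiting ray, while $\phi''(s)=v^{\top}L(u^{\ast}+sv)v>0$ together with $\phi'(0)=0$ forces $\phi(s)\to+\infty$ along that ray --- a contradiction. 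All steps check out (the interior-point continuity needed to pass to the limit in $\Psi(u^{\ast}+sv_n)\leq\Psi(u^{\ast})+C$ holds because $u^{\ast}+sv$ remains in the open orthant where $\Psi$ is smooth). What the paper's citation buys is brevity and generality over arbitrary unbounded convex $\Omega$; what your version buys is self-containedness, at the cost of an argument (identifying a nonzero limit direction lying in the recession cone of the domain) that is elementary for the orthant but would need more care for a general convex set.
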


To prove Lemma~\ref{L-3-1}, we need the following lemma.
\begin{lem}\cite[Lemma 2.8]{GHZ1}\label{L-3-4-1}
Let $f$ be a smooth strictly convex function defined in a convex set $\Omega$ with a critical point $p\in \Omega$. Then the following properties hold:
\begin{itemize}
\item[(i)] $p$ is the unique global minimum point of $f$.
\item[(ii)] If $\Omega$ is unbounded, then $$\lim_{\|x\| \to +\infty} f(x)=+\infty.$$
\end{itemize}
\end{lem}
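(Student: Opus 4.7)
The plan is to treat the two parts separately, both hinging on the first-order characterization of strict convexity at the critical point $p\in\Omega$, where $\nabla f(p)=0$. Smoothness of $f$ on $\Omega$ ensures that $p$ is interior to $\Omega$, so the usual calculus is available and small balls around $p$ lie in the domain.

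For (i), I would appeal directly to the strict-convexity inequality in its first-order form: for every $y\in\Omega$ with $y\neq p$,
\[
f(y) > f(p) + \langle \nabla f(p),\, y-p\rangle = f(p),
\]
which immediately identifies $p$ as the unique global minimizer. For (ii), my strategy is to use (i) to produce a strict positive gap between $f(p)$ and $f$ on a small sphere around $p$, and then amplify that gap linearly in distance via convexity along rays emanating from $p$. Concretely, fix $R>0$ with $\overline{B(p,R)}\subset\Omega$ and let
\[
m := \min_{\|y-p\|=R} f(y),
\]
which is attained by compactness of the sphere and strictly exceeds $f(p)$ by (i). For any $x\in\Omega$ with $\|x-p\|>R$, the point $y_x := p + R(x-p)/\|x-p\|$ lies on the sphere (hence in $\Omega$) and decomposes as $y_x=(1-s)p + s x$ with $s=R/\|x-p\|\in(0,1)$. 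Convexity gives $f(y_x)\le(1-s)f(p)+sf(x)$, which rearranges to
\[
f(x) \;\ge\; f(p) + \frac{m-f(p)}{R}\,\|x-p\|.
\]
Since $m-f(p)>0$, the right-hand side tends to $+\infty$ as $\|x\|\to+\infty$ (equivalently $\|x-p\|\to+\infty$ in an unbounded $\Omega$), which establishes the coercivity claim.

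The main obstacle is essentially conceptual rather than technical: one must spot the right geometric split between (a) compactness on a small sphere around $p$, which produces the \emph{strict} positive gap $m-f(p)>0$ from part (i), and (b) convexity along the segment from $p$ to a far-away $x$, which stretches that gap linearly in $\|x-p\|$. Once this split is identified, both parts reduce to standard convex-analysis manipulations. The only minor technical verification needed is that smoothness of $f$ forces $p$ to be an interior point of $\Omega$, so that the auxiliary closed ball $\overline{B(p,R)}$ genuinely sits inside the domain.
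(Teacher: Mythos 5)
Your argument is correct. Note that the paper does not actually prove this lemma---it is quoted verbatim from \cite[Lemma 2.8]{GHZ1} and used as a black box---so there is no in-paper proof to compare against; your write-up supplies the standard self-contained argument one would expect to find in that reference. Part (i) via the first-order strict inequality $f(y)>f(p)+\langle\nabla f(p),y-p\rangle$ is exactly right, and the coercivity step---extracting a strict gap $m-f(p)>0$ on a compact sphere and propagating it linearly along rays by the convexity inequality $f(y_x)\le(1-s)f(p)+sf(x)$---is the classical mechanism and the algebra checks out. Two minor remarks. First, the strict first-order inequality for a differentiable strictly convex function is itself a (short) lemma: the naive limit $t\to0^+$ of difference quotients only yields the non-strict inequality, and one needs the midpoint trick (apply the non-strict inequality at $z=(p+y)/2$ together with $f(z)<\tfrac12 f(p)+\tfrac12 f(y)$) to upgrade it; you may take this as standard, but it is worth being aware that it is not literally the definition. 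Second, your claim that smoothness of $f$ ``forces'' $p$ to be interior to $\Omega$ is not a genuine implication in general; what is true is that the choice of $R$ with $\overline{B(p,R)}\subset\Omega$ requires $p$ to be an interior point, and this is harmless here because in both applications in the paper the domain is open in the relevant sense ($\Omega=\mathbb{R}_{-}^{N}$ in the hyperbolic case, and the affine slice $\Upsilon\cong\mathbb{R}^{N-1}$ in the Euclidean case, with the critical point $u^{\ast}$ interior). With that caveat made explicit, the proof is complete.
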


\begin{proof}[Proof of Lemma~\ref{L-3-1}]
By the above discussion, $\Psi$ is strictly convex in $\mathbb{R}_{-}^{N}$ and $u^{\ast}$ is a critical point. It follows from Lemma~\ref{L-3-4-1} that
\[
\lim_{\|u\| \to +\infty} \Psi(u)=+\infty.
\]
Hence $\Lambda(u)\to+\infty$.
\end{proof}

\subsection{Long time existence}
In what follows, we prove the existence part of Theorem \ref{T-1-2}.
\begin{theorem}\label{0703}
The flow~\eqref{E-3-1} exists for all time.
\end{theorem}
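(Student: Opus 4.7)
The plan is to combine the Lyapunov function $\Lambda$ with the structural estimates already collected in the preliminaries. Standard ODE theory applied to the smooth vector field on the right-hand side of \eqref{E-3-1} gives a maximal interval of existence $[0,T_{\max})$; by the usual extension principle it suffices to show that on every finite subinterval $[0,T]\subset[0,T_{\max})$ the trajectory $u(t)$ stays inside a compact subset of $\mathbb{R}_{-}^{N}$. In terms of the radii this amounts to a two-sided bound $0<c\le r_i(t)\le C$ on $[0,T]$, i.e.\ both $u_i(t)\ge -C'$ and $u_i(t)\le -\delta<0$ for suitable constants.

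For the first half of this two-sided bound I use $\Lambda$ as a Lyapunov function. Using the symmetry of the Jacobian $L$ from Lemma \ref{L-2-1-2}(i), a direct computation along \eqref{E-3-1} yields
\[
\frac{d\Lambda}{dt}\;=\;\sum_{i=1}^N (K_i-k_i)\,\dot u_i \;=\; -(K-\overline{K})^{\top}L\,(K-\overline{K})\;\le\;0,
\]
so $\Lambda(u(t))\le\Lambda(u(0))$ for all $t\in[0,T_{\max})$. By Lemma \ref{L-3-1}, $\Lambda$ is proper at infinity on $\mathbb{R}_{-}^{N}$, so this a priori bound forces $\|u(t)\|$ to stay bounded; in particular every $u_i(t)$ is bounded below, ruling out $r_i\to 0$ in finite time.

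The main obstacle is ruling out $u_i(t)\to 0^{-}$, equivalently $r_i(t)\to+\infty$, in finite time. The Lyapunov inequality above does not help here, because $\Psi$ stays finite as $u_i\to 0^{-}$ with the other coordinates bounded (the defining line integral stays over a bounded domain). My plan is to bring in the Calabi energy $\mathcal{C}(u)=\tfrac12\|K(u)-\overline{K}\|^2$, which along the flow satisfies $\dot{\mathcal{C}}=-\|L(K-\overline{K})\|^2\le 0$, so $K(t)$ stays in a bounded set and in particular each $K_i(t)$ is bounded along the trajectory. Lemma \ref{23} then says that if $r_i$ were to blow up along some sequence of times, every inner angle $\vartheta_i$ at $v_i$ would tend to $0$ and hence $K_i\to 2\pi$; combining this with the positivity estimate of Lemma \ref{L-2-3} (which, once $r_i$ exceeds the threshold $M$ supplied by that lemma, produces a sign-definite lower bound on the relevant row-combination of $L$) I extract a Gr\"onwall-type differential inequality for $u_i(t)$ valid whenever $r_i(t)>M$. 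Integrating this inequality prevents $u_i$ from reaching $0$ on any finite time interval.

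Combining the lower and upper bounds, the trajectory $u(t)$ remains in a compact subset of $\mathbb{R}_{-}^{N}$ on every finite interval, so $T_{\max}=+\infty$ and the flow exists for all time. The hard part is clearly the upper bound on $r_i$: the convexity/Lyapunov framework gives the lower bound essentially for free, but the upper bound has to be extracted from the sharper structural information in Lemmas \ref{23} and \ref{L-2-3} rather than from $\Lambda$ itself.
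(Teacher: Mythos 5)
Your proposal is correct and follows essentially the same route as the paper: the lower bound on the $u_i$ comes from the monotonicity of $\Lambda$ together with its properness (Lemma \ref{L-3-1}), and the upper bound on the $r_i$ comes from combining Lemma \ref{23} (which forces $K_i>2\pi\eta>k_{\max}$ for $r_i$ large) with Lemma \ref{L-2-3} applied with $\rho=(2\pi-k_{\min})/(2\pi\eta-k_{\max})$. The only cosmetic difference is at the last step: the paper does not need a Gr\"onwall integration, since the two lemmas directly give $\frac{du_i}{dt}<0$ whenever $r_i>M$, so a simple barrier argument shows $r_i$ can never blow up; your auxiliary use of the Calabi energy to bound $K$ is harmless but unnecessary, as $K_i<2\pi$ holds automatically.
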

\begin{proof}
Since each $K_{i}$ is a smooth function of $u$, $(K_{1},K_{2},\cdots,K_{N})$ is locally Lipschitz continuous. By ODE theory \cite{T3}, the flow \eqref{E-3-1} has a unique solution $u(t)$ in $[0,\epsilon)$ for some $\epsilon>0$. Hence $u(t)$ exists in a maximal time interval $[0,T_{0})$ with $0<T_{0}\leq +\infty$.
Assume that $T_{0}$ is finite. Then for at least one $i\in \{1,2,\cdots,N\}$, there exists $t_{m}\to T_{0}$ such that
\[
r_{i}(t_{m})\to 0 \  \text{or}\    r_{i}(t_{m})\to +\infty.
\]

For the first case, we have $u_{i}(t_{m})\to -\infty$.
Lemma \ref{L-3-1} implies
\[
\Lambda(u(t_{m}))\to+\infty.
 \]
Since $L$ is positive definite, a calculation yields
\[
\begin{aligned}
\frac{d\Lambda}{dt}
&=\sum_{i=1}^{N} \frac{\partial \Psi}{\partial u_{i}}\frac{du_{i}}{dt}\\
&=-\sum_{i=1}^{N}\sum_{j=1}^{N}(K_{i}-k_{i})\frac{\partial K_{j}}{\partial u_{i}}(K_{j}-k_{j})\\
&=-(K_{1}-k_{1},\cdots,K_{N}-k_{N})L(K_{1}-k_{1},\cdots,K_{N}-k_{N})^{T}\\
&\leq0.
\end{aligned}
\]
It follows that
\[
\Lambda(u(t_{m}))\leq\Lambda(u(0)),
\]
which leads to a contradiction.

We now consider the second case. 
Let $k_{\max}$ and $k_{\min}$ be the maximum and minimum values among $\{k_{1}, k_{2},\cdots, k_{N}\}$, respectively.
By Lemma \ref{23},
there exists $\eta\in(0,1)$ and $M_{1}>0$ for any $r_{i}>M_{1}$ such that
\begin{equation}\label{E-3-5}
K_{i}>2\pi \eta>k_{\max}.
\end{equation}
Meanwhile, Lemma \ref{L-2-1} implies
\begin{equation}\label{E-3-4}
\frac{\partial K_{j}}{\partial u_{i}}<0\  \text{and} \ \frac{\partial K_{i}}{\partial u_{i}}>0.
\end{equation}
A direct computation yields
\[
\begin{aligned}
\frac{du_{i}}{dt}
&=\sum_{j=1}^{N}(k_{j}-K_{j})\frac{\partial K_{j}}{\partial u_{i}}\\
&=\sum_{j\thicksim i}(k_{j}-K_{j})\frac{\partial K_{j}}{\partial u_{i}}+(k_{i}-K_{i})\frac{\partial K_{i}}{\partial u_{i}}\\
&<-\left[(2\pi-k_{\min})\sum_{j\thicksim i}\frac{\partial K_{j}}{\partial u_{i}}+(2\pi\eta-k_{\max})\frac{\partial K_{i}}{\partial u_{i}}\right].
\end{aligned}
\]
Note that
\[
\frac{2\pi-k_{\min}}{2\pi\eta-k_{\max}}-1>0.
\]
It follows from Lemma \ref{L-2-3} that there exists $M_{2}>0$ such that for $r_i>M_2$,
\[
(2\pi-k_{\min})\sum_{j\thicksim i}\frac{\partial K_{j}}{\partial u_{i}}+(2\pi\eta-k_{\max})\frac{\partial K_{i}}{\partial u_{i}}>0.
\]

Let $M=\max\{M_{1},M_{2}\}.$ Then $\frac{du_{i}}{dt}<0,$ which implies $$\frac{dr_{i}}{dt}<0.$$
Since $r_{i}(t_{m})\to +\infty$, we can choose a sufficiently large $n$ such that $r_{i}(t_{n})>M.$
Let
\[
a=\inf_{s}\left\{s<t_{n}: r_{i}(t)>M\quad(\forall t\in[s,t_{n}])\right\}.
\]
Then we can verify that $$r_{i}(a)=M.$$
On the other hand, we have $\frac{dr_{i}(t)}{dt}<0$
for $t\in[a,t_{n}].$ Therefore, we see that $$r_{i}(a)>r_{i}(t_{n})>M,$$
which contradicts to $r_{i}(a)=M.$

By the above discussions, we know that $r_{i}(t)$ is uniformly bounded for $i=1,2,\cdots,N$.
Hence the flow \eqref{E-3-1} exists for all time and $u(t)$ stays in a compact set of $\mathbb{R}_{-}^{N}$.
\end{proof}

\subsection{Convergence}
Using an energy function, we prove the convergence of the flow.
\begin{theorem}\label{19}
Suppose that $\overline{K}=(k_{1},k_{2},\cdots,k_{N})$ is hyperbolic attainable.
The flow~\eqref{E-3-1} converges exponentially fast to a radius vector which produces a hyperbolic ideal circle pattern metric on surfaces with discrete curvatures $k_{1},k_{2},\cdots, k_{N}$.
\end{theorem}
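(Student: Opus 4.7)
The plan is to combine the compactness of the trajectory from Theorem~\ref{0703} with the monotonicity of the Lyapunov function $\Lambda$ to deduce convergence to $u^{\ast}$, and then to use linearization at $u^{\ast}$ to promote this to an exponential rate.

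First I would record what is already available. The computation carried out in the proof of Theorem~\ref{0703} gives
\[
\frac{d\Lambda}{dt}=-(K_{1}-k_{1},\dots,K_{N}-k_{N})\,L\,(K_{1}-k_{1},\dots,K_{N}-k_{N})^{T}\leq 0,
\]
so $\Lambda(u(t))$ is nonincreasing; and since $\Psi$ is strictly convex with critical point $u^{\ast}$, Lemma~\ref{L-3-4-1} yields $\Lambda\geq 0$ with $\Lambda(u^{\ast})=0$. Consequently $\Lambda(u(t))$ has a finite nonnegative limit. By Theorem~\ref{0703} the orbit $\{u(t):t\geq 0\}$ lies in a compact set $\Omega\subset\mathbb{R}_{-}^{N}$, and on $\Omega$ both $K(u)$ and $L(u)$ are uniformly bounded and continuous.

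Next I would run a standard $\omega$-limit argument. Extract any subsequence $u(t_{n})\to u_{\infty}\in\Omega$. If $K(u_{\infty})\neq \overline{K}$, then by the positive definiteness of $L$ at $u_{\infty}$ (Lemma~\ref{L-2-1-2}(i)) the value $\frac{d\Lambda}{dt}\big|_{u=u_{\infty}}$ is strictly negative. Continuity of the right-hand side of \eqref{E-3-1} together with boundedness of $\dot{u}$ on $\Omega$ then forces $\Lambda$ to decrease by a definite positive amount over a fixed length time interval beginning at each sufficiently large $t_{n}$, contradicting the convergence of $\Lambda(u(t))$. Hence $K(u_{\infty})=\overline{K}$, and the injectivity of $Th$ from Theorem~\ref{T-1-1}(i) gives $u_{\infty}=u^{\ast}$. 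Since every convergent subsequence of the bounded orbit has the same limit $u^{\ast}$, we conclude $u(t)\to u^{\ast}$ as $t\to+\infty$, which via the change of variables $r_{i}=2\tanh^{-1}(e^{u_{i}})$ produces the desired hyperbolic ideal circle pattern metric with curvatures $k_{1},\dots,k_{N}$.

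Finally I would linearize at $u^{\ast}$ to get the exponential rate. The flow reads $\dot{u}=-L(u)(K(u)-\overline{K})$, and setting $v=u-u^{\ast}$ the smoothness of $L$ and $K$ produces
\[
\dot{v}=-L(u^{\ast})^{2}\,v+O(|v|^{2}).
\]
By Lemma~\ref{L-2-1-2}(i), $L(u^{\ast})$ is symmetric positive definite; if $\lambda>0$ denotes its smallest eigenvalue, then $L(u^{\ast})^{2}$ dominates $\lambda^{2}I$. Once $u(t)$ enters a sufficiently small neighborhood of $u^{\ast}$ (which happens in finite time by the previous paragraph), the auxiliary Lyapunov function $\tfrac{1}{2}|v|^{2}$ absorbs the quadratic error into the linear decay and yields $|u(t)-u^{\ast}|\leq Ce^{-\alpha t}$ for any $\alpha\in(0,\lambda^{2})$ and some constant $C>0$. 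The main obstacle is the subsequence-to-full-convergence step: compactness alone is not enough, and one must upgrade the pointwise strict negativity of $\frac{d\Lambda}{dt}$ away from $u^{\ast}$ to a uniform decrement of $\Lambda$ on a fixed time window using continuity of the vector field on $\Omega$; after that, the exponential rate is a routine stability-theoretic consequence of the fact that $L$ is everywhere positive definite on $\mathbb{R}_{-}^{N}$.
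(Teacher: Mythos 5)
Your proposal is correct, but it takes a genuinely different route from the paper. The paper works with the energy $C(u)=\sum_{i=1}^{N}(K_{i}-k_{i})^{2}$ rather than with $\Lambda$: since Theorem~\ref{0703} confines the orbit to a compact subset of $\mathbb{R}_{-}^{N}$ on which the eigenvalues of $L^{2}$ are uniformly bounded below by some $\lambda_{0}>0$, it obtains the differential inequality $C^{\prime}\leq -2\lambda_{0}C$ directly, hence $|K_{i}-k_{i}|\leq\sqrt{C(u(0))}\,e^{-\lambda_{0}t}$ from time zero; integrating $\dot{u}_{i}=\sum_{j}(k_{j}-K_{j})\partial K_{j}/\partial u_{i}$ against this bound then shows $u(t)$ converges exponentially, and injectivity of $Th$ identifies the limit with $u^{\ast}$. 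You instead run a LaSalle-type $\omega$-limit argument to get $u(t)\to u^{\ast}$ first, and only then extract the rate by linearizing $\dot{v}=-L(u^{\ast})^{2}v+O(|v|^{2})$ near the equilibrium. Both arguments are sound and both lean on the same two inputs (compactness of the orbit and positive definiteness of $L$). The paper's route is shorter because the uniform spectral bound on the whole orbit makes the exponential decay global in time, bypassing the subsequence-to-full-convergence step that you correctly identify as the delicate point of your approach; your route has the mild advantage that the rate constant is tied only to the spectrum of $L$ at $u^{\ast}$ rather than to a worst-case bound over the trajectory, and it makes the identification of the limit with $u^{\ast}$ more transparent. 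One small remark: your closing sentence attributes the rate to positive definiteness of $L$ ``everywhere on $\mathbb{R}_{-}^{N}$,'' but what your argument actually uses is positive definiteness at $u^{\ast}$ together with smoothness; pointwise positive definiteness on a noncompact set would not by itself give a uniform $\lambda$.
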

\begin{proof}
Define 
\[
C(u)=\sum_{i=1}^{N}(K_{i}-k_{i})^{2}
\]
as an energy function. Since $u(t)$ stays in a compact set of $\mathbb{R}_{-}^{N}$ and $L$ depends on $u$ continuously, there exists $\lambda_{0}>0$ such that
\[
\begin{aligned}
C^\prime(u)&=2\sum_{i=1}^{N}\sum_{j=1}^{N}(K_{j}-k_{j})\frac{\partial K_{j}}{\partial u_{i}}\frac{du_{i}}{dt}\\
&=-2(K_{1}-k_{1},\cdots,K_{N}-k_{N})L^{2}(K_{1}-k_{1},\cdots,K_{N}-k_{N})^{T}\\
&\leq-2\lambda_{0}\sum_{i=1}^{N}(K_{i}-k_{i})^2\\
&=-2\lambda_{0} C(u).
\end{aligned}
\]
It follows that
\[
\sum_{i=1}^{N}(K_{i}-k_{i})^2 =C(u)\leq C(u(0))e^{-2\lambda_{0} t}.
\]
Therefore, we get
\[
|K_{i}-k_{i}|\leq \sqrt{C(u(0))}e^{-\lambda_{0}t}
\]
for $i=1,2,\cdots,N$.
Note that $\left|\frac{\partial K_{j}}{\partial u_{i}}\right|$ is uniformly bounded.
Then we have
\[
|u_{i}-u^{\ast}_{i}|=\left|\sum_{j=1}^{N}\int_{\infty}^{t}(K_{j}-k_{j})\frac{\partial K_{j}}{\partial u_{i}}dt\right|\leq \lambda e^{-\lambda_{0}t}
\]
for some positive number $\lambda$.
As $t\to+\infty$, we derive that $u$ converges exponentially fast to $u^{\ast}$.
We thus complete the proof of Theorem \ref{19}.
\end{proof}

\section{Combinatorial Calabi flow in Euclidean background geometry}\label{S4}\noindent

\subsection{Euclidean background geometry}
Let $u_{i}=\ln r_{i}$ for $i=1,2,\cdots,N$ and $u=(u_{1},u_{2},\cdots,u_{N})$.
Suppose that $\overline{K}=(k_{1},k_{2},\cdots,k_{N})$ is a Euclidean attainable vector.
Then the flow \eqref{E-1-5} transforms into the equivalent flow
\begin{equation}\label{E-3-6}
\frac{du_{i}}{dt}=\sum_{j=1}^{N}(k_{j}-K_{j})\frac{\partial K_{j}}{\partial u_{i}}
\quad(i=1,2,\cdots,N),
\end{equation}
where $u(0)$ is an initial vector in $\mathbb{R}^{N}$.

A direct calculation gives
\[
\sum_{i=1}^{N}\frac{du_{i}}{dt}=-(1,1,\cdots,1)L(K_{1}-k_{1},\cdots,K_{N}-k_{N})^{T},
\]
where $L$ is the Jacobian matrix mentioned in Lemma \ref{L-2-1-2}. Moreover, the fact that the matrix $L$ has a null space $\{k(1,1,\cdots,1):k\in \mathbb{R} \}$ implies that $$\sum_{i=1}^{N}\frac{du_{i}}{dt}=0.$$
Hence $\sum_{i=1}^{N}u_{i}$ remains a constant. It means that
$u\in \Upsilon$, where
\[
\Upsilon=\left\{u\in \mathbb{R}^{N}:\sum_{i=1}^{N}u_{i}=\sum_{i=1}^{N}u_{i}(0)\right\}.
\]

Let
\[
\Psi(u)=\int_{u(0)}^{u} \sum_{i=1}^{N} (K_{i}-k_{i})du_{i}
\]
be a function defined in the set $\Upsilon$.
A direct calculation shows that the Hessian matrix of $\Psi$ is equal to the Jacobian matrix $L$.
Because $u$ is restricted in $\Upsilon$, it follows from Lemma \ref{L-2-1-2} that $L$ is positive definite.
Similar to the discussion in hyperbolic background geometry, $\Psi$ is well defined and strictly convex.

By Theorem \ref{T-1-1}, we know that there exists a radius vector $r^{\ast}\in \mathbb{R}_{+}^{N}$ such that $K_{i}(r^{\ast})=k_{i}$ for $i=1,2,\cdots,N$.
Note that $$Th(\mathbb{R_{+}^{N}})=Th(\Upsilon).$$
Since $L$ is positive definite, there exists a unique vector $u^{\ast}\in\Upsilon$ corresponding to $r^{\ast}$ such that $$K_{i}(u^{\ast})=k_{i}$$ for $i=1,2,\cdots,N$.
Moreover,
\[
\frac{\partial\Psi}{\partial u_{i}} \,\bigg|_{u=u^{\ast}}=K_{i}(u^{\ast})-k_{i}=0.
\]
Hence $u^{\ast}$ is a unique critical point of $\Psi$.

We now consider the function
\[
\Lambda(u)=\Psi(u)-\Psi(u^\ast).
\]
By a similar argument to the proof of Lemma \ref{L-3-1}, we obtain the following result.
\begin{lem} \label{L-3-3}
In Euclidean background geometry, we have
\[
\lim_{\|u\|\to+\infty}\Lambda(u)=+\infty.
\]
\end{lem}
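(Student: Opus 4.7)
My plan is to mirror the proof of Lemma \ref{L-3-1} almost verbatim, with the affine hyperplane $\Upsilon$ playing the role of the convex unbounded domain $\mathbb{R}_{-}^{N}$. The hyperbolic argument reduced to Lemma \ref{L-3-4-1}, which requires three ingredients: a convex and unbounded domain, a strictly convex function on that domain, and a critical point of the function inside the domain. All three are available in the Euclidean setting once $\Upsilon$ is identified as the correct ambient domain.

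First, I would note that $\Upsilon=\{u\in\mathbb{R}^{N}:\sum_{i=1}^{N}u_{i}=\sum_{i=1}^{N}u_{i}(0)\}$ is an affine hyperplane in $\mathbb{R}^{N}$, hence convex and (since $N\ge 2$) unbounded as a subset of $\mathbb{R}^{N}$. Second, the strict convexity of $\Psi|_{\Upsilon}$ has already been established in the discussion preceding the lemma: the Hessian of $\Psi$ is the Jacobian $L$, which by Lemma \ref{L-2-1-2}(ii) is positive definite on the tangent directions of $\Upsilon$, i.e.\ on $\{v\in\mathbb{R}^{N}:\sum_{i}v_{i}=0\}$. Third, the text has already exhibited a point $u^{\ast}\in\Upsilon$ such that $\partial\Psi/\partial u_{i}\,|_{u^{\ast}}=K_{i}(u^{\ast})-k_{i}=0$ for every $i$, so $u^{\ast}$ is a critical point of $\Psi|_{\Upsilon}$.

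With these three ingredients in place, I would apply Lemma \ref{L-3-4-1}(ii) to the strictly convex function $\Lambda=\Psi-\Psi(u^{\ast})$ on the convex unbounded set $\Upsilon$, which immediately yields
\[
\lim_{\|u\|\to+\infty,\,u\in\Upsilon}\Lambda(u)=+\infty,
\]
as required.

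There is no serious obstacle here; the only point to be careful about is interpreting ``critical point'' and ``positive definite'' in the restricted (tangential) sense appropriate to the hyperplane $\Upsilon$. The null direction $(1,1,\ldots,1)$ of $L$ on $\mathbb{R}^{N}$ is harmless because the Euclidean flow \eqref{E-3-6} preserves $\sum_{i}u_{i}$ and therefore never leaves $\Upsilon$, so the analysis can legitimately be carried out entirely inside $\Upsilon$, reducing the Euclidean case to a direct application of the same convex-analytic lemma used in hyperbolic background geometry.
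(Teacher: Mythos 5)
Your proposal is correct and matches the paper's intent exactly: the paper proves Lemma \ref{L-3-3} ``by a similar argument to the proof of Lemma \ref{L-3-1},'' i.e.\ by applying Lemma \ref{L-3-4-1}(ii) to the strictly convex function $\Psi$ on the convex unbounded set $\Upsilon$ with critical point $u^{\ast}$, which is precisely what you do. Your explicit attention to interpreting convexity and criticality tangentially to the hyperplane $\Upsilon$ is a welcome clarification of a step the paper leaves implicit.
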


\subsection{Long time existence and convergence}
Now we prove the following result.
\begin{theorem}\label{T-4-1}
Suppose that $\overline{K}=(k_{1},k_{2},\cdots,k_{N})$ is Euclidean attainable.
The flow \eqref{E-3-6} exists for all time and converges exponentially fast to a radius vector that produces a Euclidean ideal circle pattern metric on surfaces with discrete curvatures $k_{1},k_{2},\cdots,k_{N}$.
\end{theorem}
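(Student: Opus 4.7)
The plan is to follow the same strategy as in the hyperbolic case (Theorems \ref{0703} and \ref{19}), with the only structural adjustment being that the analysis is carried out on the affine hyperplane $\Upsilon$, on which the Jacobian $L$ is genuinely positive definite by Lemma \ref{L-2-1-2}(ii). Three preliminary observations suffice to make the adjustment transparent. First, as already noted in the discussion above, $\sum_i du_i/dt \equiv 0$ along \eqref{E-3-6}, so the flow preserves $\Upsilon$. Second, condition $(\mathbf{C1})$ together with the equality case $A=V$ in Theorem \ref{T-1-1}(ii) forces both $\sum_i K_i$ and $\sum_i k_i$ to equal $2\pi N - 2\sum_{e\in E}\Theta(e)$, so the residual vector $(K_1-k_1,\dots,K_N-k_N)$ always lies in the subspace $W=\{v\in \mathbb{R}^N:\sum_i v_i=0\}$ on which $L$ is coercive. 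Third, the critical point $u^\ast\in \Upsilon$ of $\Psi$ constructed above is the unique zero of $(K_i-k_i)_i$ in $\Upsilon$.

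For long-time existence I would use $\Lambda(u)=\Psi(u)-\Psi(u^\ast)$ as a Lyapunov function, compute
\[
\frac{d\Lambda}{dt}=\sum_{i=1}^{N}(K_i-k_i)\frac{du_i}{dt}=-(K-k)^{T}L(K-k)\le 0,
\]
and conclude $\Lambda(u(t))\le \Lambda(u(0))$ on the maximal existence interval $[0,T_0)$. Lemma \ref{L-3-3} then implies that the sublevel set $\{u\in \Upsilon:\Lambda(u)\le \Lambda(u(0))\}$ is bounded in $\Upsilon$, so $u(t)$ stays in a compact subset $\mathcal{K}\subset \Upsilon$. Standard ODE theory then rules out finite-time blowup and gives $T_0=+\infty$.

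For convergence I would introduce the energy $C(u)=\sum_{i=1}^{N}(K_i-k_i)^2$ and repeat the Theorem \ref{19} computation verbatim to obtain
\[
C'(u)=-2(K-k)^{T}L^{2}(K-k).
\]
By continuity of $L$ on the compact set $\mathcal{K}$ and its positive definiteness on $W$, there is a uniform $\lambda_0>0$ with $(K-k)^{T}L^{2}(K-k)\ge \lambda_0\sum_{i=1}^{N}(K_i-k_i)^2$, so Gr\"onwall yields $C(u(t))\le C(u(0))e^{-2\lambda_0 t}$ and hence exponential decay of each $|K_i-k_i|$. Combined with the uniform bound on $|\partial K_j/\partial u_i|$ over $\mathcal{K}$, integration of \eqref{E-3-6} from $t$ to infinity (exactly as in the proof of Theorem \ref{19}) shows $u(t)\to u^\ast$ exponentially fast. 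The main delicate point, which does not arise in the hyperbolic case, is the global degeneracy of $L$ along the direction $(1,\dots,1)$; this is resolved entirely by restricting to $\Upsilon$ and verifying that the error vector stays in $W$, after which the rest of the argument is completely parallel to Section \ref{S3}.
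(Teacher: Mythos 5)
Your proposal is correct and takes essentially the same route as the paper: the Lyapunov function $\Lambda=\Psi-\Psi(u^{\ast})$ combined with the properness statement of Lemma \ref{L-3-3} to keep $u(t)$ in a compact subset of $\Upsilon$ and rule out finite-time blowup, followed by the energy $C(u)=\sum_{i=1}^{N}(K_{i}-k_{i})^{2}$ and a Gr\"onwall argument exactly as in Theorem \ref{19}. Your explicit verification that the residual vector $(K_{1}-k_{1},\cdots,K_{N}-k_{N})$ lies in the hyperplane $\{v:\sum_{i}v_{i}=0\}$ orthogonal to the kernel of $L$ --- which is what makes the uniform bound $(K-k)^{T}L^{2}(K-k)\geq\lambda_{0}\sum_{i}(K_{i}-k_{i})^{2}$ legitimate --- is a detail the paper leaves implicit behind the phrase ``similar to the argument of the flow \eqref{E-3-1},'' and is a worthwhile addition rather than a deviation.
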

\begin{proof}
By a similar argument to the hyperbolic background geometry, we prove that the flow \eqref{E-1-5} has a unique solution $u(t)$ in a maximal time interval $[0,T_{0})$ with $0<T_{0}\leq +\infty$. We claim that $T_{0}=+\infty$. Otherwise, assume that $T_{0}$ is finite. Then for at least one $i\in\{1,2,\cdots,N\}$, there exists a sequence $\{t_{m}\}$ with limit $T_{0}$ such that
\[
r_{i}(t_{m})\to 0 \  \text{or}\  r_{i}(t_{m})\to +\infty.
\]
It shows that \[u_{i}(t_{m})\to -\infty \  \text{or}\  u_{i}(t_{m})\to +\infty.\]
No matter which case occurs, it follows from Lemma \ref{L-3-3} that $\Psi(u(t_{m}))\to+\infty,$
which implies $$\Lambda(u(t_{m}))\to+\infty.$$
Nevertheless, a direct computation gives
\[
\begin{aligned}
\frac{d\Lambda}{dt}
&=\sum_{i=1}^{N} \frac{\partial \Psi}{\partial u_{i}}\frac{du_{i}}{dt}\\
&=-\sum_{i=1}^{N}\sum_{j=1}^{N}(K_{j}-k_{j})\frac{\partial K_{j}}{\partial u_{i}}(K_{i}-k_{i})\\
&=-(K_{1}-k_{1},\cdots,K_{N}-k_{N})L(K_{1}-k_{1},\cdots,K_{N}-k_{N})^{T}\\
&\leq0.
\end{aligned}
\]
As a result, we have
\[
\Lambda(u(t_{m}))\leq\Lambda(u(0)),
\]
which leads to a contradiction.
Hence the flow \eqref{E-3-6} never touches the boundary of $\Upsilon\subset\mathbb{R}^{N}$ in any finite time interval.
Namely, the flow exists for all time and $u(t)$ stays in a compact set of $\Upsilon$.

The proof of exponentially convergence of the flow \eqref{E-3-6} is similar to the argument of the flow \eqref{E-3-1}.
\end{proof}

Finally, we give the proof of Theorem \ref{T-1-2}.
\begin{proof}[Proof of Theorem \ref{T-1-2}]
Since the flow \eqref{E-1-4} (resp. \eqref{E-1-5}) is equivalent to the flow \eqref{E-3-1}  (resp. \eqref{E-3-6}), the conclusion of Theorem \ref{T-1-2} is an immediate consequence of Theorems \ref{0703}, \ref{19} and \ref{T-4-1}.
\end{proof}

\vskip .05in
	\noindent{\bf  Acknowledgements.}
    The present investigation was supported by the \textit{Natural Science Foundation of Hunan Province} under Grant no. 2022JJ30185 of the P. R. China.
The authors thank Prof. Ze Zhou for his stimulating discussions and Prof. Yueping Jiang for his invaluable help.

\vskip .05in
\noindent{\bf Conflicts of interest.} The authors declare that they have no conflict of interest.

\vskip .05in
\noindent{\bf Data availability statement.}  Data sharing is not applicable to this article as no datasets were generated or analysed during the current study.

\vskip .05in
\noindent{\bf Statement of independent research outcomes.}
It has come to the authors' attention that Prof. Xiaoxiao Zhang at Beijing Wuzi University has independently arrived at similar conclusions, coinciding with the authors' work.

\end{document}